\renewenvironment{enumerate}{\begin{enumorig}[label=\textup{(\arabic*)}, noitemsep, topsep=1.5mm plus 1.5mm, leftmargin=2.5em]}{\end{enumorig}}
\newtheorem{theorem}{Theorem}
\newtheorem{lemma}[theorem]{Lemma}
\newtheorem{proposition}[theorem]{Proposition}
\def\setN{\mathbb{N}}
\def\setR{\mathbb{R}}
\def\calA{\mathcal{A}}
\def\calB{\mathcal{B}}
\def\calD{\mathcal{D}}
\def\calF{\mathcal{F}}
\def\calI{\mathcal{I}}
\def\calX{\mathcal{X}}
\def\calY{\mathcal{Y}}
\let\leq\leqslant
\let\geq\geqslant
\let\setminus\smallsetminus
\let\Gamma\varGamma
\let\Theta\varTheta
\def\tbinom#1#2{\smash{\textstyle\binom{#1}{#2}}}
\let\old@setaddresses\@setaddresses
\def\@setaddresses{\bigskip\bgroup\parindent 0pt\let\scshape\relax\old@setaddresses\egroup}
\title{Decomposition~of~multiple~packings~with subquadratic~union~complexity}
\author{J\'anos Pach\and Bartosz Walczak}
\address[J\'anos Pach]{\'Ecole Polytechnique F\'ed\'erale de Lausanne, Switzerland and Alfred R\'enyi Institute of Mathematics, Hungarian Academy of Sciences, Budapest, Hungary}
\email{pach@cims.nyu.edu}
\address[Bartosz Walczak]{Theoretical Computer Science Department, Faculty of Mathematics and Computer Science, Jagiellonian University, Krak\'ow, Poland}
\email{walczak@tcs.uj.edu.pl}
\thanks{A journal version of this paper appeared in \emph{Combin.\ Prob.\ Comput.}\ 25(1):145--153, 2016.}
\thanks{J\'anos Pach was partially supported by NSF grant CCF-08-30272, by OTKA under EuroGIGA projects GraDR and ComPoSe 10-EuroGIGA-OP-003, and by Swiss National Science Foundation grants 200020-144531 and 200021-137574.}
\thanks{Bartosz Walczak was partially supported by Swiss National Science Foundation grant 200020-144531 and by MNiSW grant 884/N-ESF-EuroGIGA/10/2011/0 under EuroGIGA project GraDR\@.}
\subjclass[2010]{52C15, 05B40}
\begin{document}

\begin{abstract}
Suppose $k$ is a positive integer and $\mathcal{X}$ is a \emph{$k$-fold packing} of the plane by infinitely many arc-connected compact sets, which means that every point of the plane belongs to at most $k$ sets.
Suppose there is a function $f(n)=o(n^2)$ with the property that any $n$ members of $\mathcal{X}$ determine at most $f(n)$ \emph{holes}, which means that the complement of their union has at most $f(n)$ bounded connected components.
We use tools from extremal graph theory and the topological Helly theorem to prove that $\mathcal{X}$ can be decomposed into at most $p$ ($1$-fold) packings, where $p$ is a constant depending only on $k$ and $f$.
\end{abstract}

\maketitle

\section{Introduction}

The notions of multiple packings and coverings were introduced in a geometric setting independently by Harold Davenport and L\'aszl\'o Fejes T\'oth \cite{FeT72}.
In the present note, we will be concerned only with packings.
A \emph{$k$-fold packing} is a family $\calX$ of sets with the property that the intersection of any $k+1$ members of $\calX$ is empty.
A $1$-fold packing is simply called a \emph{packing}.
The problem of determining the maximum density of a $k$-fold packing with congruent copies of a fixed convex body has been extensively studied---see \cite{FTK93} and references therein.
For small values of $k$, it was found that the densest $k$-fold lattice packings in the plane split into $k$ packings \cite{Blu57,DuH72,Hep59}.
The situation gets more complicated for larger values of $k$, but in general a $k$-fold packing by convex bodies that are \emph{fat} (that is, the ratio of their circumradii to their inradii is bounded) can be decomposed into $O(k)$ packings, as was shown by Pach \cite{Pac80}.
A simple but interesting corollary of this fact is that any $k$-fold packing by homothets (uniformly scaled and translated copies) of a convex body in $\setR^d$ splits into at most $c_dk$ packings, where the constant $c_d$ depends only on the dimension.

The problem of decomposing a family of sets into packings can be rephrased as a coloring problem for intersection graphs.
The \emph{intersection graph} of a family $\calX$ of sets is a graph on the vertex set $\calX$ in which two vertices are joined by an edge if and only if the corresponding members of $\calX$ have nonempty intersection.
Thus, a decomposition of $\calX$ into $k$ packings is precisely a proper $k$-coloring of the intersection graph of $\calX$.

If the intersection graph of $\calX$ has clique number at most $k$, then $\calX$ is a $k$-fold packing, but not necessarily the other way around.
For example, the set of the three (closed) sides of a triangle forms a $2$-fold packing, while its intersection graph is $K_3$, so it has clique number~$3$.
However, for axis-parallel boxes in Euclidean space, the two notions coincide: a family is a $k$-fold packing if and only if the clique number of its intersection graph is at most $k$.
Asplund and Gr\"unbaum \cite{AsG60} proved that the intersection graphs of axis-parallel rectangles in $\setR^2$ with clique number $k$ have chromatic number $O(k^2)$.
Equivalently, every $k$-fold packing of the plane by axis-parallel rectangles can be decomposed into $O(k^2)$ packings.
On the other hand, Burling \cite{Bur65} constructed triangle-free intersection graphs of axis-parallel boxes in $\setR^3$ with arbitrarily large chromatic number.
Pawlik, Kozik, Krawczyk, Laso\'n, Micek, Trotter and Walczak \cite{PKK+14,PKK+13} provided similar constructions for straight-line segments and other kinds of geometric sets in the plane with the property that their intersection graphs are triangle-free but can have arbitrarily large chromatic number.
For a survey on coloring geometric intersection graphs, see \cite{Kos04}.

The aim of the present note is to show that for every family $\calF$ of geometric objects of ``small complexity'' (in the sense described later), there exists a function $p(k)$ such that every $k$-fold packing of the plane by members of $\calF$ can be split into $p(k)$ packings.

There are some standard measures of complexity for families of geometric objects, used in bounding the computational complexity of various algorithms in motion planning, computer vision, and geometric transversal theory.
A \emph{simple arc} with respect to a finite family $\calX$ of sets is a Jordan arc whose interior is entirely contained in or disjoint from every set in $\calX$.
The \emph{union boundary complexity} of $\calX$ is the minimum number of simple arcs whose union is the boundary of $\bigcup\calX$.
A related measure of complexity is the number of \emph{holes} in $\bigcup\calX$, that is, bounded arc-connected components of $\setR^2\setminus\bigcup\calX$.
The number of holes is bounded from above by the union boundary complexity.
However, for some families of geometric objects, the number of holes in the union can be much smaller than the union boundary complexity.

We prove that for any fixed $k$, every $k$-fold packing by arc-connected compact sets in the plane, the union of any $n$ of which determines a subquadratic number of holes, can be decomposed into a bounded number of packings.

\begin{theorem}
\label{thm:main}
Let\/ $k\in\setN$, let\/ $f\colon\setN\to\setN$ be a function such that\/ $f(n)=o(n^2)$, and let\/ $\calF$ be an infinite family of arc-connected compact sets in the plane with the property that every finite subfamily\/ $\calX$ of\/ $\calF$ determines at most\/ $f(|\calX|)$ holes.
Then there exists a constant\/ $p=p_f(k)$ such that every\/ $k$-fold packing by members of\/ $\calF$ can be decomposed into\/ $p$ packings.
\end{theorem}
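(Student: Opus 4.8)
The plan is to bound the chromatic number of the intersection graph $G$ of the $k$-fold packing by a constant $p_f(k)$, since decomposing the packing into $p$ parts is exactly properly colouring $G$ with $p$ colours. By the de Bruijn--Erd\H{o}s theorem a graph is $p$-colourable as soon as all of its finite subgraphs are, so it suffices to colour the intersection graph $G$ of a \emph{finite} $k$-fold packing $\calX\subseteq\calF$, say $|\calX|=n$. Every subfamily of $\calX$ is again a $k$-fold packing of members of $\calF$, so the class of graphs in question is hereditary and an $m$-vertex graph in it comes from a family with at most $f(m)$ holes. Denote by $\varepsilon(\cdot)$ the Euler characteristic. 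I will use two elementary facts: a simply connected compact planar set does not separate the plane, hence the intersection of any subfamily has connected complement and so, when nonempty, has no holes and $\varepsilon$ equal to its number of connected components; and a connected compact planar set has $\varepsilon$ equal to one minus its number of holes. Additivity of $\varepsilon$ gives the inclusion--exclusion identity $\varepsilon(\bigcup\calY)=\sum_{\emptyset\neq\sigma\subseteq\calY}(-1)^{|\sigma|+1}\varepsilon(\bigcap\sigma)$ for every finite family $\calY$.

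\emph{Bounding the clique number.} The first goal is $\omega(G)\le\omega_0$ for a constant $\omega_0=\omega_0(k,f)$; this is where the topological Helly theorem --- in the form used here, inclusion--exclusion for $\varepsilon$ --- enters. Let $\calZ$ be a clique of $G$, i.e.\ a pairwise intersecting subfamily. Solving the inclusion--exclusion identity for the top term and using that the union of any $j\le k$ members of $\calZ$ has at most $f(j)\le f(k)$ holes, an induction on $j$ bounds the number of connected components of the intersection of any at most $k$ members of $\calZ$ by a constant $c_*=c_*(k,f)$. Hence, by Ramsey's theorem, if $|\calZ|$ is large then $\calZ$ has a large subclique $\calZ^*$ in which, for each $j\le k$, all $j$-wise intersections share the same type: either all empty, or all nonempty with the same number $\xi_j\ge1$ of components (intersections of $k+1$ members being empty, as $\calZ^*$ is a $k$-fold packing). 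Let $j_0$ be the least $j$ for which the $j$-wise intersections are empty; then $3\le j_0\le k+1$ because members of a clique pairwise intersect, and inclusion--exclusion gives
\[
\varepsilon\Bigl(\bigcup\calZ^*\Bigr)=\sum_{j=1}^{j_0-1}(-1)^{j+1}\binom{|\calZ^*|}{j}\,\xi_j,
\]
a polynomial in $|\calZ^*|$ of degree exactly $j_0-1\ge2$. But $\bigcup\calZ^*$ is connected, so the left-hand side equals one minus its number of holes and therefore lies in $[1-f(|\calZ^*|),1]$; since $f(n)=o(n^2)$, a polynomial of degree $\ge2$ cannot stay in such an interval for large argument. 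So $|\calZ^*|$, and hence $|\calZ|$, is bounded, giving $\omega(G)\le\omega_0$.

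\emph{Bounding the chromatic number.} It remains to bound $\chi(G)$ in terms of $\omega(G)\le\omega_0$; here I expect the real difficulty, and here extremal graph theory is used. The scheme is induction on the clique number: the neighbourhood of any member $A$ induces a subfamily that is still a $k$-fold packing of members of $\calF$, but with clique number at most $\omega_0-1$, so by induction it has bounded chromatic number and hence contains a pairwise disjoint subfamily of size at least a fixed fraction of $\deg A$. If $G$ had large average degree, double counting would then yield many members sharing a large common pairwise disjoint family of neighbours --- a complete bipartite subgraph $K_{t,t}$ of $G$ with independent sides and $t$ large. One would like to show that such a $K_{t,t}$ forces of order $t^2$ holes: using arcwise connectedness of the members, each of its $t^2$ edges produces a short closed curve enclosing a bounded component of the complement of the union, and then the hypothesis $f(2t)=o(t^2)$ bounds $t$. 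With the K\H{o}v\'{a}ri--S\'{o}s--Tur\'{a}n theorem this would bound the number of edges, hence the degeneracy, hence $\chi(G)$, completing the induction with $p_f(k)=p(\omega_0)$.

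\emph{The main obstacle.} The delicate point is the last implication: a hole created ``locally'' by a few members can be \emph{filled in} by other members of the packing, so one cannot charge holes to bicliques (or to dense subgraphs) naively. Making the charging legitimate --- exploiting that in a $k$-fold packing each point, and in a controlled sense each member, can be responsible for destroying only a bounded number of holes --- and then converting the resulting subquadratic edge bound into a genuine \emph{constant} bound on $\chi(G)$, is the technical heart of the proof, and is where the topological facts above are combined with extremal graph theory.
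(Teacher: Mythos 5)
Your reduction to finite packings (de Bruijn--Erd\H{o}s) matches the paper's compactness step, and your instinct that an induced $K_{\ell,\ell}$ (a biclique with both sides pairwise disjoint) forces $\sim\ell^2$ holes is exactly what the paper proves. But the decisive step of your plan fails: excluding $K_{t,t}$ and invoking K\H{o}v\'ari--S\'os--Tur\'an only gives $|E(G)|=O(n^{2-1/t})$, hence degeneracy $O(n^{1-1/t})$ --- not a constant --- so the induction on the clique number never closes and no constant $p_f(k)$ comes out. For the same reason, the double counting you sketch to extract a $K_{t,t}$ with independent sides from ``large average degree'' only works when the average degree is polynomially large in $n$ (of order $n^{1-1/t}$); at constant average degree there are far too few stars to pigeonhole, so nothing is gained. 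What is needed here is the far stronger theorem of Fox and Pach that intersection graphs of arc-connected sets in the plane with no $K_{t,t}$ \emph{subgraph} have a vertex of bounded degree (equivalently, linearly many edges); this is precisely what the paper uses, and it cannot be replaced by the abstract KST bound. This is the missing ingredient, not a technicality.

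Two further remarks. The obstacle you single out as the technical heart --- holes created by a biclique being ``filled in'' by other members of the packing --- is not actually an obstacle: the hypothesis bounds the number of holes of the union of \emph{every} subfamily, so one only counts holes of the union of the $2\ell$ biclique members themselves; the paper's Lemma~\ref{lem:euler} (Euler's formula applied to an auxiliary planar bipartite graph on the components of the doubly covered set and of the $X_i$ minus it) yields $h\geq\ell^2-2\ell+1>f(2\ell)$ for an induced $K_{\ell,\ell}$ with no charging argument at all. The genuinely delicate remaining step in the paper is passing from ``no induced $K_{\ell,\ell}$'' to ``no $K_{t,t}$ subgraph'' (so that Fox--Pach applies), which it does via hypergraph Tur\'an and induction on $k$ (Lemmas~\ref{lem:ind1} and~\ref{lem:ind2}). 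Your clique-number bound could in principle serve the same purpose (bounded clique number, no induced $K_{\ell,\ell}$, and Ramsey together exclude $K_{t,t}$ subgraphs), but as written it rests on inclusion--exclusion for the Euler characteristic of arbitrary compact sets --- finiteness of the number of components of all intersections and validity of additivity are asserted, not proved --- whereas the paper deliberately avoids this machinery. So the proposal both misidentifies the main difficulty and leaves the step that actually requires a deep result (Fox--Pach) unproved.
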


It is enough to prove Theorem~\ref{thm:main} for $k$-fold packings that are finite subfamilies of $\calF$, as then the general statement follows by a standard compactness argument.
Therefore, for the remainder of the paper, every $k$-fold packing that we consider is assumed to be finite.
With this assumption, we will prove the following stronger result.

\begin{theorem}
\label{thm:main2}
Let\/ $k$, $f$, and\/ $\calF$ be the same as in the previous theorem.
Then there exists a constant\/ $p=p_f(k)$ such that the intersection graph of any finite\/ $k$-fold packing by members of\/ $\calF$ has a vertex of degree smaller than\/ $p_f(k)$.
\end{theorem}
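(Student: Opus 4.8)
The plan is to argue by contradiction. Suppose that for some finite $k$-fold packing $\calX\subseteq\calF$ the intersection graph $G$ has minimum degree at least $p$, where the constant $p=p_f(k)$ will be fixed at the end; put $N=|\calX|$, so $|E(G)|\geq pN/2$. It suffices to show that $|E(G)|$ is at most a constant multiple of $N$ (with the constant depending only on $k$ and $f$), since this is a contradiction once $p$ is large enough. One reduction is free: containments are cheap. Every point of a member $A\in\calX$ lies in $A$ and in each member of $\calX$ strictly containing $A$, so at most $k-1$ members strictly contain $A$; hence at most $(k-1)N$ edges of $G$ join a pair of sets one of which contains the other, and these may be discarded. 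Every remaining edge $\{A,B\}$ corresponds to a genuine crossing of the boundary curves of $A$ and $B$.

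The first substantial ingredient --- the place where the topological Helly theorem enters --- is the assertion that there is a constant $c_k>0$ such that, whenever the intersection graph of a $k$-fold packing of simply connected compact sets in the plane contains a complete bipartite subgraph $K_{t,t}$ with parts $\calA$ and $\calB$, the subfamily $\calA\cup\calB$ already determines at least $c_kt^2$ holes. For $k\leq 2$ this is immediate from inclusion--exclusion for the Euler characteristic: no three members of a $2$-fold packing share a point, so all terms of order $\geq 3$ vanish, and one gets that the number of holes of $\bigcup(\calA\cup\calB)$ is at least $M-2t+1\geq t^2-2t+1$, where $M$ is the number of intersecting pairs in $\calA\cup\calB$ (each nonempty pairwise intersection contributing at least $1$ to the sum of Euler characteristics, as its components are simply connected). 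For general $k$ the higher-order terms of the inclusion--exclusion must be controlled; here I would use the topological Helly theorem to pass to large subfamilies $\calA'\subseteq\calA$ and $\calB'\subseteq\calB$ whose low-order intersection pattern is tame --- say, so that for a positive fraction of the $4$-cycles $A\,B\,A'\,B'$ in the induced complete bipartite subgraph the four sets have empty triple intersections and connected pairwise intersections --- and conclude that each such $4$-cycle encloses a hole, the resulting holes being distinct up to a multiplicity bounded in terms of $k$. Since $f(n)=o(n^2)$, this forces the existence of a constant $t_0=t_0(k,f)$ such that $G$ contains no $K_{t_0,t_0}$; in particular (taking $\calA,\calB$ inside a clique) the clique number of $G$ is bounded in terms of $k$ and $f$.

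It then remains to deduce, using tools from extremal graph theory, that a $K_{t_0,t_0}$-free intersection graph of a $k$-fold packing of simply connected planar sets has only $O(N)$ edges. The structure to exploit is that such a graph is a string graph (being the intersection graph of connected planar sets) and that the underlying regions have bounded depth. One route: by the separator theorem for $K_{t,t}$-free string graphs (Fox--Pach) together with the bounded depth --- which, after discarding containments, makes the chosen overlap regions $R_{ij}\subseteq A_i\cap A_j$, one per edge, pairwise disjoint and hence arranged in an essentially planar incidence pattern with the sets --- a divide-and-conquer recursion yields $|E(G)|\leq C\cdot N$ with $C=C(k,f)$. Setting $p=p_f(k):=2C+2k$ then contradicts $|E(G)|\geq pN/2$, and the theorem follows.

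I expect the main obstacles to be two: proving the Key Lemma for arbitrary $k$ --- extracting a quadratic number of distinct holes from a complete bipartite subgraph, which is exactly where the topological Helly theorem does the essential work of taming the intersection pattern --- and, in the last step, pushing the edge bound down from near-linear to genuinely linear.
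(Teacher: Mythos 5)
Your overall skeleton agrees with the paper: show that the intersection graph contains no $K_{t,t}$ for a suitable $t=t(k,f)$, then invoke the Fox--Pach theorem (Theorem~\ref{thm:fox-pach}), which already yields a vertex of degree bounded by $c(t)$ for $K_{t,t}$-free intersection graphs of arc-connected sets in the plane; your containment-edge reduction and the separator/divide-and-conquer re-derivation of a linear edge bound in the last step are therefore unnecessary (you would just be reproving Fox--Pach). The genuine gap is your ``Key Lemma'' for $k\geq 3$, which is exactly the heart of the theorem and is left as a plan rather than a proof. You verify it only for $k\leq 2$, where it is essentially the paper's Euler-formula bound (Lemma~\ref{lem:euler}, with $|\Gamma(S)|$ at least the number of intersecting pairs). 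For general $k$ you propose to use the topological Helly theorem to pass to subfamilies with a ``tame'' low-order intersection pattern and to argue that the resulting $4$-cycles enclose distinct holes up to bounded multiplicity. None of this is substantiated, and each step is problematic: the topological Helly theorem requires connected pairwise intersections (the version quoted in the paper is for pseudodiscs), and for arbitrary simply connected compact sets you have no way to guarantee this; no Ramsey/Helly argument is exhibited that empties out triple intersections inside a part of the $K_{t,t}$ while retaining quadratically many cross pairs; and even granting empty triples and connected pairwise intersections, the claims that each $4$-cycle $A\,B\,A'\,B'$ encloses a hole of $\bigcup(\calA'\cup\calB')$ and that distinct $4$-cycles produce holes with multiplicity bounded in terms of $k$ are nontrivial and unproved (a hole of the four sets may be filled by other members, and many $4$-cycles may point to the same hole).

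For comparison, the paper never proves your clean ``$c_kt^2$ holes in $\bigcup(\calA\cup\calB)$'' statement. Instead it (i) rules out \emph{induced} $K_{\ell,\ell}$'s directly via Lemma~\ref{lem:euler}, since their vertex set is a $2$-fold packing, and (ii) handles non-induced $K_{t,t}$'s by observing that one part has no $\ell$ pairwise disjoint members and then bounding the size of such a part (Lemma~\ref{lem:ind2}) by induction on $k$: the hypergraph Tur\'an theorem of Katona--Nemetz--Simonovits (Theorem~\ref{thm:turan}) either produces a large $(k-1)$-fold packing or a positive fraction of intersecting $k$-tuples, and in the latter case Lemma~\ref{lem:ind1} reduces $k$ by intersecting all sets with a ``popular'' member (simple connectivity ensuring that holes survive the intersection), bottoming out at $k=2$ where Lemma~\ref{lem:euler} forces more than $f(n)$ holes. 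The Helly theorem is used only in the pseudodisc warm-up, not in the general proof. To complete your argument you would need either a genuine proof of your Key Lemma for all $k$ --- which your Helly-based sketch does not deliver --- or an inductive mechanism of the above kind; as it stands, the case $k\geq 3$ is missing.
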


One of the earliest results on the union boundary complexity and, hence, on the number of holes was established by Kedem, Livne, Pach and Sharir \cite{KLPS86}.
They proved that the union boundary complexity of every family of $n$ \emph{pseudodiscs}, that is, compact sets in the plane whose boundaries are closed simple curves any two of which share at most two points, is $O(n)$.
Therefore, our theorems imply that any $k$-fold packing by pseudodiscs can be decomposed into a bounded number $p(k)$ of packings.

Matou\v{s}ek, Pach, Sharir, Sifrony and Welzl \cite{MPS+94} showed that families of $n$ fat triangles in the plane determine $O(n)$ holes.
Efrat and Sharir \cite{EfS00} established a near-linear upper bound on the union boundary complexity of families of fat convex sets any two of which share at most a bounded number of boundary points.
Therefore, our results also apply to this case and generalize the planar version of the statement on fat convex sets mentioned in the first paragraph.
For further results on the complexity of various kinds of fat objects, consult \cite{Ber08,Efr05,ERS93,Kre98}.

Our proof of Theorem~\ref{thm:main2} is based on a result due to Fox and Pach \cite{FoP10}, which asserts that the intersection graphs of finite families of arc-connected sets in the plane with no subgraph isomorphic to $K_{t,t}$ have bounded minimum degree, for every $t\geq 2$.
The bound on $p_f(k)$ it gives for a fixed function $f$ is very bad and certainly far from optimal.
For example, if $f(n)=\Theta(n)$, then the bound is more than double exponential in $k$.
Independently of the work in this paper, using the probabilistic sampling technique due to Clarkson and Shor \cite{ClS89} (see also \cite{Sha91}), Micek and Pinchasi \cite{MiP} proved that the intersection graph of a $k$-fold packing by geometric objects with linear union boundary complexity has a vertex of degree $O(k)$.

The assumption that $f(n)=o(n^2)$ is crucial for Theorems \ref{thm:main} and~\ref{thm:main2} to hold.
An $n\times n$ grid of thin horizontal and vertical rectangles forms a $2$-fold packing with $\Theta(n^2)$ holes, and all vertices in the intersection graph of these rectangles have degree $n$.

First, in Section~\ref{sec:pseudodiscs}, we establish our result in a simple special case---for $k$-fold packings by pseudodiscs.
For the proof of Theorem~\ref{thm:main2} in its full generality, we need a technical lemma on the number of holes determined by $2$-fold packings, which is formulated and proved in Section~\ref{sec:euler}.
The proof in the general case is presented in Section~\ref{sec:proof}.

\section{The case of pseudodiscs}
\label{sec:pseudodiscs}

The members of a family of compact sets in the plane are called \emph{pseudodiscs} if the boundary of each set is a simple closed curve and any two of these curves share at most two points.
We first give a short proof of the following assertion.

\begin{proposition}
\label{prop:pseudodiscs}
For every positive integer\/ $k$, there is a constant\/ $p=p(k)$ such that every\/ $k$-fold packing by pseudodiscs has a member that intersects fewer than\/ $p$ other members.
\end{proposition}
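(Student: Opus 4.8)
The plan is to assume, for contradiction, that every member of a $k$-fold packing $\calX$ of pseudodiscs intersects at least $p$ others, and derive a contradiction for $p$ large enough (depending only on $k$). So the intersection graph $G$ of $\calX$ has minimum degree $\geq p$, hence $|E(G)|\geq \tfrac{p}{2}|\calX|$, i.e.\ $G$ is dense. The key known fact we invoke is that families of pseudodiscs have linear union boundary complexity (Kedem et al.), so any subfamily $\calY\subseteq\calX$ of size $m$ determines $O(m)$ holes — call the bound $cm$ for an absolute constant $c$.

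First I would translate ``many edges'' into ``$K_{t,t}$ subgraph'' via the Kővári–Sós–Turán theorem: a graph on $N$ vertices with no $K_{t,t}$ has $O(N^{2-1/t})$ edges, so once $p>$ some function of $t$ we get a $K_{t,t}$ in $G$ with $t$ to be chosen. (Here we need $|\calX|$ large; if $\calX$ is small we are done trivially, since then $p=|\calX|$ works.) This gives two disjoint sets $\calA,\calB\subseteq\calX$ with $|\calA|=|\calB|=t$ and every set in $\calA$ meeting every set in $\calB$. Now I want to extract a hole from each such crossing pair, carefully enough that distinct pairs give distinct holes, contradicting the linear bound on holes in $\calA\cup\calB$ (which has $\leq c\cdot 2t$ holes, far less than $t^2$ pairs). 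The topological/combinatorial mechanism: when pseudodisc $A$ meets pseudodisc $B$ but neither contains the other, their boundaries cross (generically) in exactly two points, and $\partial A\cup\partial B$ bounds a small lens-shaped region; moreover if $A'\in\calA$ and $B'\in\calB$ are another crossing pair, the four boundaries together create a bounded complementary region. The cleanest route is to observe that $\calA\cup\calB$ has union boundary complexity $O(t)$ yet the bipartite structure of $t^2$ crossings forces the arrangement of the $2t$ boundary curves to have $\Omega(t^2)$ faces in the bounded part of the complement of $\bigcup(\calA\cup\calB)$ — because restricted to two curves from $\calA$ and two from $\calB$ one already gets a hole, and these can be made disjoint by an appropriate pruning.

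Concretely, here is the step I would push hardest. After getting $K_{t,t}$ between $\calA$ and $\calB$, I would first discard nested pairs: if $A\supseteq B$ or $B\supseteq A$ for some crossing pair, delete the larger set; since each set participates in at most $2t$ pairs and containments are rare among pseudodiscs (each set can contain or be contained in few others because boundaries meet in $\leq 2$ points — actually containment is unrestricted, so this needs the packing condition: a chain of nested sets has length $\leq k$), a constant-fraction of the bipartite structure survives, giving $K_{t',t'}$ with $t'=t/(2k)$ and no containments. Then every crossing pair has genuinely crossing boundaries. Now take any $A_1,A_2\in\calA$ and $B_1,B_2\in\calB$; the four pseudodiscs pairwise intersect only in the allowed bipartite pattern plus possibly $A_1\cap A_2$, $B_1\cap B_2$, and a standard planarity/Euler argument on the arrangement of four pseudodisc boundaries shows the complement of the union of these four sets, if it had no bounded face, would force one of the missing intersections — contradiction. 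So each such quadruple yields a hole in $\bigcup(\calA\cup\calB)$; and partitioning $\calA,\calB$ into $t'/2$ disjoint pairs each yields a hole sitting ``between'' its four curves, and by choosing the pairing greedily these holes are distinct (each hole's boundary uses arcs of four specific curves). That produces $\Omega(t')$ distinct holes from disjoint quadruples — wait, that is only linear; to beat $O(t)$ I instead keep all $\binom{t'}{2}^2$ quadruples and argue the holes are distinct via their bounding curve-quadruples, giving $\Omega(t'^4)$, which contradicts $c\cdot 2t'$ holes once $t'$ (hence $p$) is large enough.

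The main obstacle I expect is the last distinctness argument: showing that different curve-quadruples bound \emph{different} holes, i.e.\ that a single hole in $\bigcup(\calA\cup\calB)$ cannot be ``accounted for'' by many quadruples simultaneously. Since a hole is a bounded face of the complement and its boundary lies on $\bigcup\partial$, it touches a bounded number of the $2t'$ curves in general but possibly more than four; so a cleaner approach is to bound holes \emph{directly}: a $K_{t',t'}$ crossing pattern among pseudodiscs must, by an Euler-formula count on the planar arrangement of the $2t'$ boundary curves (using that two pseudodisc boundaries cross $\leq 2$ times, so the arrangement has $O(t'^2)$ vertices, $O(t'^2)$ edges, hence $O(t'^2)$ faces — \emph{but} the number of faces contained in the complement of $\bigcup$ must be $\geq$ number of ``independent'' crossings $\gtrsim t'^2$ unless many faces lie inside some set, which the $k$-fold packing bound caps). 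Balancing $t'^2$ holes against the Kedem-type linear bound $O(t')$ gives the contradiction for $t'$ a constant depending on $k,c$ only, and tracing back, $p=p(k)$ is a constant depending only on $k$. I would present the Euler-formula hole count as the technical heart and cite the pseudodisc union-complexity result as the external input.
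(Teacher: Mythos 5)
There is a genuine gap at the very first step of your argument, and it is precisely the point where the paper needs a substantive theorem. From the assumption that every member meets at least $p$ others you only get $|E(G)|\geq\frac{p}{2}|\calX|$, i.e.\ linearly many edges. The K\H{o}v\'ari--S\'os--Tur\'an threshold for forcing a $K_{t,t}$ is $\Theta(N^{2-1/t})$ with $N=|\calX|$, which is superlinear; so for a fixed constant $p$ and $N$ large, minimum degree $p$ does \emph{not} force a $K_{t,t}$ --- your reduction fails exactly in the regime (large $N$) that matters, and the parenthetical ``if $\calX$ is small we are done'' does not help. What is needed is the much stronger statement that $K_{t,t}$-free intersection graphs of arc-connected sets in the plane have \emph{bounded minimum degree} (equivalently, linearly many edges); this is the Fox--Pach theorem, which the paper invokes as its external engine and which cannot be replaced by the general-purpose K\H{o}v\'ari--S\'os--Tur\'an bound.

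The second half of your sketch is also shakier than you suggest, though closer in spirit to the paper's proof of the general Theorem~\ref{thm:main2} than to its proof of Proposition~\ref{prop:pseudodiscs}. The claim of $\Omega(t'^4)$ distinct holes is impossible on its face: the arrangement of $2t'$ pseudodisc boundaries, any two crossing at most twice, has only $O(t'^2)$ faces in total. Your fallback --- an Euler-formula count giving $\Omega(t'^2)$ holes against the $O(t')$ union-complexity bound --- is essentially the paper's Lemma~\ref{lem:euler}, but it only works once the bipartite family is (close to) a $2$-fold packing; the step you wave at (``unless many faces lie inside some set, which the $k$-fold packing bound caps'') is the nontrivial part, handled in the paper by Lemmas~\ref{lem:ind1} and~\ref{lem:ind2} via a hypergraph Tur\'an argument. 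For pseudodiscs the paper avoids all of this: inside one vertex class of the putative $K_{t,t}$ it applies the topological Helly theorem plus Ramsey to find $9$ sets forming a $2$-fold packing, uses planarity/$4$-colorability to extract $3$ pairwise disjoint sets from each class, and gets a contradiction from a noncrossing drawing of $K_{3,3}$ --- no union-complexity input (Kedem et al.) is used in this case at all. So both the graph-theoretic reduction and the geometric contradiction in your proposal would need to be replaced or substantially repaired.
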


For the proof, we use two well-known results: a recent theorem of Fox and Pach \cite{FoP10,FoP14} and the classical topological Helly theorem \cite{Hel30}.
Let $K_{t,t}$ denote a complete bipartite graph with $t$ vertices in each of its parts.
The main idea of the proof of Proposition~\ref{prop:pseudodiscs} is to show that the intersection graph of any $k$-fold packing by pseudodiscs has no subgraph isomorphic to $K_{t,t}$ for $t$ large enough, and then apply the following result.

\begin{theorem}[Fox, Pach \cite{FoP10,FoP14}]
\label{thm:fox-pach}
For any\/ $t\in\setN$, there is a constant\/ $c=c(t)$ with the property that the intersection graph of any finite family of arc-connected sets in the plane with no subgraph isomorphic to\/ $K_{t,t}$ has a vertex of degree smaller than\/ $c$.
Furthermore, this holds with\/ $c(t)=t(\log t)^{\gamma}$ for some absolute constant\/ $\gamma>0$.
\end{theorem}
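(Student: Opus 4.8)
The plan is to reduce the minimum-degree statement to a Tur\'an-type bound on the number of edges. It suffices to prove that there is a constant $C=C(t)$ such that the intersection graph $G$ of any finite family of $n$ arc-connected sets in the plane with no subgraph isomorphic to $K_{t,t}$ has at most $Cn$ edges. Indeed, granting this, the average degree of $G$ is at most $2C$, so $G$ must contain a vertex of degree at most $2C$, and we may take $c=2C+1$. The entire difficulty is therefore concentrated in establishing that $K_{t,t}$-free intersection graphs of connected planar sets have only linearly many edges.

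To bring planar topology to bear, I would first replace each arc-connected set by a curve without changing the intersection graph. For every unordered pair of meeting sets $A,B$, fix a point of $A\cap B$; since $A$ is arc-connected, all the points chosen for $A$ can be joined inside $A$ by a tree $T_A\subseteq A$. Two sets $A,B$ meet if and only if $T_A$ and $T_B$ meet, since a common chosen point lies in both trees, while $T_A\cap T_B\neq\emptyset$ forces $A\cap B\neq\emptyset$. Thus $G$ is realized as the intersection graph of a family of curves in the plane, a \emph{string graph}. The key topological input is then a separator theorem for such graphs: a string graph with $m$ edges has a balanced separator of size $O(\sqrt m\,\log m)$. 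I expect this separator theorem to be the main obstacle, as it is the only place where the planarity of the host surface is genuinely exploited, and driving the separator size down to $O(\sqrt m\,\log m)$ rather than a trivial linear bound requires the full strength of planar cutting and flow arguments applied to the drawing of the curve family.

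With the separator theorem available, I would combine it with the $K_{t,t}$-free hypothesis in a recursive argument to push the number of edges down to $O(n)$. At each stage the separator splits $G$ into two balanced pieces together with a small interface; the pieces are treated by induction, while the edges incident to the interface are controlled through the $K_{t,t}$-free condition via the classical K\H{o}v\'ari--S\'os--Tur\'an bound on bipartite graphs with no $K_{t,t}$. The role of $K_{t,t}$-freeness is precisely to exclude dense, clique-like pieces---a clique is itself a string graph, so some hypothesis beyond planarity is indispensable---and this is what lets the recursion terminate with a linear bound. The second delicate point will be the bookkeeping across the $O(\log n)$ levels of the recursion: the interface contributions must be summed so as to stay linear in $n$ in total, which is exactly where the small, $\sqrt m$-type separator size is needed. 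Once the linear edge bound is in place, the reduction of the first paragraph finishes the proof.
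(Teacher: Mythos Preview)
The present paper does not prove this theorem; it is quoted from Fox and Pach \cite{FoP10,FoP14} and used as a black box in the proofs of Proposition~\ref{prop:pseudodiscs} and Theorem~\ref{thm:main2}. Your outline is essentially the argument carried out in those cited papers: replace the arc-connected sets by curves to obtain a string graph, invoke the Fox--Pach separator theorem for string graphs, and combine it recursively with the K\H{o}v\'ari--S\'os--Tur\'an bound to obtain a linear edge count, from which the minimum-degree statement follows by averaging. So there is no in-paper proof to compare against, and your sketch matches the approach of the original source. One minor remark: the separator size actually proved in \cite{FoP10} is $O(m^{3/4}\sqrt{\log m})$ rather than $O(\sqrt{m}\log m)$, but any $o(m)$ separator suffices for the recursion you describe, so this does not affect the validity of the plan.
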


\begin{theorem}[Helly \cite{Hel30}]
\label{thm:helly}
For any family of pseudodiscs in which every triple has a point in common, all members have a point in common.
\end{theorem}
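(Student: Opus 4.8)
The plan is to prove Theorem~\ref{thm:helly} by induction on the number $n$ of pseudodiscs, following the classical proof of Helly's theorem for convex sets but substituting a topological argument for the use of Radon's theorem, which is unavailable because pseudodiscs need not be convex. The cases $n\le 3$ are exactly the hypothesis, so the first real work is the case $n=4$, which plays the role that Radon's theorem plays in the convex setting: from the assumption that every three of four pseudodiscs meet, one must deduce that all four meet. Everything else is bootstrapped from this case.

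Before turning to $n=4$, I would record the one structural consequence of the defining property of pseudodiscs that the argument rests on: if two pseudodiscs $P$ and $Q$ meet, then $P\cap Q$ is \emph{connected} (indeed simply connected). This is a direct Jordan-curve computation: since $\partial P$ and $\partial Q$ cross in at most two points, their union cuts the plane into few regions and $P\cap Q$ is a single ``lens''. Together with the fact that each $P_i$ is itself simply connected, this lets me join witness points by arcs and fill in disks inside the relevant sets.

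For the case $n=4$, let $P_1,P_2,P_3,P_4$ be pseudodiscs with every three meeting, and choose witnesses $w_i\in\bigcap_{j\ne i}P_j$. I would build a continuous map of the boundary of a tetrahedron into the plane: send the four vertices to $w_1,w_2,w_3,w_4$; send each edge $w_iw_j$ to an arc inside $\bigcap_{l\ne i,j}P_l$, a pairwise intersection of two pseudodiscs, hence connected, so the arc exists; and send each triangular face $w_iw_jw_k$ to a disk inside the single pseudodisc $P_l$ with $l\notin\{i,j,k\}$, which is possible because the three bounding arcs already lie in $P_l$ and $P_l$ is simply connected. Now the topological Radon theorem (a consequence of the Borsuk--Ulam theorem) forces two \emph{disjoint} faces of the tetrahedron to have intersecting images. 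If these are a pair of opposite edges, say $w_1w_2$ and $w_3w_4$, the common point lies in $(P_3\cap P_4)\cap(P_1\cap P_2)=P_1\cap P_2\cap P_3\cap P_4$; if they are a vertex $w_i$ and its opposite face, the common point shows that $w_i$ itself lies in the one set $P_i$ it was not already known to meet. Either way all four pseudodiscs share a point.

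For general $n$ I would again pick witnesses $x_i\in\bigcap_{j\ne i}P_j$, nonempty by the inductive hypothesis since those are $n-1$ pseudodiscs every three of which meet, and reduce to the four-point construction applied to $x_1,x_2,x_3,x_4$, exactly as in the classical induction. The main obstacle, and the place where the two-point property must be used globally rather than pairwise, is that this reduction asks for arcs joining the witnesses \emph{inside multi-fold intersections} $\bigcap_{l\ne i,j}P_l$, which for $n\ge 5$ are intersections of three or more pseudodiscs; to draw these arcs, and to fill the corresponding faces, I need such intersections to be connected and simply connected. I therefore expect the crux of the proof to be a structural lemma stating that every nonempty intersection of pseudodiscs is connected---indeed contractible---so that pseudodiscs form a good cover. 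This does not follow from the pairwise statement by naive induction, since intersecting a topological disk with a further pseudodisc may create many boundary crossings, and must instead be established by a global analysis of the arrangement of the bounding curves, for instance via the Jordan curve theorem and Euler's formula. With that lemma in hand the induction closes; in fact it then allows the whole theorem to be deduced at once from the nerve theorem, the good cover property bounding the planar Helly number by~$3$.
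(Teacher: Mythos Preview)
The paper does not prove Theorem~\ref{thm:helly}; it is quoted as a classical result of Helly~\cite{Hel30} and used as a black box in the proof of Proposition~\ref{prop:pseudodiscs}. So there is no ``paper's own proof'' to compare your attempt against.

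As for your outline itself: the $n=4$ step via the topological Radon/Borsuk--Ulam argument is sound and is one of the standard routes to the planar topological Helly theorem. You are also right that the pairwise lens property (that $P\cap Q$ is a topological disc when two pseudodiscs meet) is exactly what makes the edge maps of the tetrahedron exist. Where your write-up is honest but incomplete is the inductive step: you correctly observe that for $n\ge 5$ the edges of the tetrahedron must live in $(n-2)$-fold intersections, and that to draw them you need those intersections to be connected. You then defer this to a ``structural lemma'' that nonempty intersections of pseudodiscs are contractible. That lemma is true, but note that it is essentially the whole theorem: once you know pseudodiscs form a good cover, the nerve theorem gives Helly number~$3$ immediately, and the tetrahedron construction and the induction on $n$ become superfluous. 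So your plan, as written, proves the $n=4$ case cleanly and then reduces the general case to a statement of equal strength rather than to the $n=4$ case.

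If you want a self-contained argument along your lines, one clean fix is to prove the good-cover lemma directly by induction on the number of pseudodiscs (the boundary of a nonempty $k$-fold intersection is a single Jordan curve, which one can see by analysing how $\partial P_k$ meets the disc $P_1\cap\cdots\cap P_{k-1}$), and then invoke the nerve theorem; the Borsuk--Ulam step is then unnecessary. Alternatively, one can run the induction on $n$ differently so that only pairwise intersections are ever needed, but that requires more care than ``exactly as in the classical induction''.
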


\begin{proof}[Proof of Proposition \ref{prop:pseudodiscs}]
In view of Theorem~\ref{thm:fox-pach}, it is sufficient to prove that, for $t$ large enough, the intersection graph of the pseudodiscs contains no $K_{t,t}$.
Suppose it does, and consider the $t$ pseudodiscs that form the first vertex class of $K_{t,t}$.
Color each triple of them \emph{red} if they have a point in common and \emph{blue} otherwise.
It follows from Theorem~\ref{thm:helly} that there are no $k+1$ pseudodiscs all of whose triples are red.
Otherwise, they would share a point, contradicting the assumption that the pseudodiscs form a $k$-fold packing.
Thus, if $t$ is large enough, then, by Ramsey's theorem, the first vertex class of $K_{t,t}$ has $9$ pseudodiscs all of whose triples are blue, that is, which form a $2$-fold packing.

To continue, we need an easy observation that the intersection graph $G$ of any $2$-fold packing $\calD$ of the plane by pseudodiscs is planar.
To see this, first note that if there are two nested pseudodiscs in $\calD$, then the inner one can be disregarded, as it has degree $1$ in $G$.
Thus, assume there are no two nested pseudodiscs in $\calD$.
For each pseudodisc $A\in\calD$, choose a point $x_A\in A$ that lies in no other pseudodisc in $\calD$.
Then, for each intersecting pair of pseudodiscs $A,B\in\calD$, connect the points $x_A$ and $x_B$ by an arc that lies in $A\cup B$ but avoids all pseudodiscs in $\calD\setminus\{A,B\}$.
This yields a drawing of $G$ in which every crossing pair of edges shares an endpoint.
For each crossing point, considered one by one, remove from the two crossing edges some very small parts around that point, and reconnect the remaining parts of the edges appropriately in one of the two possible noncrossing ways so as to obtain a new drawing of $G$ with that crossing removed.
After removing all crossings, we are left with a plane drawing of $G$.

It follows that the intersection graph of the $9$ pseudodiscs from the first vertex class of $K_{t,t}$ is planar.
Consequently, it is properly $4$-colorable, so at least $3$ of the $9$ pseudodiscs must be pairwise disjoint.
In the same way, we can choose $3$ pairwise disjoint pseudodiscs from the second vertex class of $K_{t,t}$.
Again, the $6$ pseudodiscs thus chosen form a $2$-fold packing, so their intersection graph is planar, but that graph is $K_{3,3}$.
This is the desired contradiction.
\end{proof}

\section{Lower bound on the number of holes}
\label{sec:euler}

For a set $X\subseteq\setR^2$, let $\Gamma(X)$ denote the family of arc-connected components of $X$, and let $h(X)$ stand for the number of holes in $X$, that is, bounded arc-connected components of $\setR^2\setminus X$.

\begin{lemma}
\label{lem:euler}
Let\/ $X_1,\ldots,X_n$ be (not necessarily distinct) compact sets in the plane that form a\/ $2$-fold packing (the intersection of any three of them is empty).
Let\/ $S$ be the set of points that belong to exactly two sets from\/ $X_1,\ldots,X_n$.
Then
\[h\Bigl(\bigcup_{i=1}^nX_i\Bigr)\geq|\Gamma(S)|-\sum_{i=1}^n|\Gamma(X_i)|+1.\]
\end{lemma}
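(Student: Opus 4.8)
The plan is to set up a planar graph (or more precisely a planar subdivision) recording the combinatorics of the arrangement of $X_1,\dots,X_N$ and to run an Euler-formula count on it. First I would choose, inside each arc-connected component of $S$ (the set of points covered exactly twice), a single representative point, and similarly inside each bounded complementary component (each hole) of $U:=\bigcup_{i=1}^N X_i$ a representative point; since no point is covered three times, around each point of $S$ locally only two of the sets meet, so $S$ is, roughly speaking, the ``$1$-skeleton'' of the arrangement. The key combinatorial object will be the planar graph $G$ whose vertices are the components of $S$, whose edges record adjacencies of these components along the boundary curves of the $X_i$'s, and whose faces are the components of $\setR^2\setminus U$ together with the components of the interiors. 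The heart of the argument is an Euler-formula inequality $V-E+F\geq 2$ (equality for a connected plane graph, $\geq$ in general after adding it up over connected pieces and being careful about isolated components), from which, after identifying $V$ with $|\Gamma(S)|$, bounding $E$ from above, and identifying the bounded faces with holes plus components-of-union contributions, one extracts the stated inequality.

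The cleanest way to organize the bookkeeping, I think, is to argue by a \emph{discharging/contraction} reduction rather than build $G$ from scratch: contract each $X_i$ and each component of $S$ and each hole to a point, and count incidences. Concretely, here is the step order I would follow. (1) Reduce to the case where each $X_i$ is arc-connected, i.e.\ replace the sum $\sum|\Gamma(X_i)|$ by $N$ and each $X_i$ by one of its components; one checks that splitting a set into its components only helps the inequality, because it increases the right-hand side's subtracted sum by the right amount while not decreasing $h(\bigcup)$ or $|\Gamma(S)|$ — actually one must be a little careful here, since $S$ can change, so I would instead keep the $X_i$ general and carry $\sum|\Gamma(X_i)|$ through the Euler count directly. (2) Build the abstract bipartite ``incidence'' structure: on one side the components of $S$, on the other the components of the $X_i$'s; each component of $S$ lies in the intersection of exactly two of the $X_i$'s (since points of $S$ are doubly covered and there are no triple points), hence in exactly two of the components, giving exactly two incidences per $S$-component. (3) Form a plane graph whose vertex set is $\Gamma(S)\cup\{\text{one node per component of each }X_i\}$ and whose edges are these incidences; this graph is planar because it can be drawn inside the arrangement (each $S$-component node placed at its representative point, each component-of-$X_i$ node placed inside that component, edges drawn as disjoint arcs within $X_i$). (4) Apply Euler's formula to this plane graph: it has $|\Gamma(S)|+\sum_{i=1}^N|\Gamma(X_i)|$ vertices and $2|\Gamma(S)|$ edges, and since it is planar, $F\leq E-V+1+(\text{number of connected components})\leq 2|\Gamma(S)| - |\Gamma(S)| - \sum|\Gamma(X_i)| + 1 + c$. (5) Finally, relate the bounded faces of this drawing to holes of $\bigcup X_i$: every hole of $\bigcup X_i$ gives rise to at least one bounded face of the drawing (the representative point of the hole lies in some bounded face), and conversely — here one must check that distinct holes land in distinct faces and that the number of extra ``components'' $c$ is absorbed — to get $h(\bigcup X_i)+1\geq F \geq |\Gamma(S)| - \sum|\Gamma(X_i)| + 1$, which rearranges to the claim.

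The main obstacle, and the step I expect to require the most care, is step (5): making precise the correspondence between \emph{faces} of the auxiliary plane graph and \emph{holes} of $\bigcup X_i$, and simultaneously controlling the number of connected components of the auxiliary graph (the slack term $c$ in Euler's formula). A clean way to handle both at once is to connect things up: add to the graph, for each component of $\setR^2\setminus\bigcup X_i$, nothing, but instead observe that the complement of the drawing deformation-retracts onto (or at least has the same number of bounded components as) the complement of $\bigcup X_i$, because each $X_i$ (component) is simply connected and can be shrunk onto the tree/graph formed by its incident $S$-components and its center node. That retraction identifies bounded faces of the drawing with holes of the union and shows the auxiliary graph restricted to the ``relevant'' part is connected exactly when $\bigcup X_i$ is, so the component slack is controlled. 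I would spell this retraction out carefully — using that each $X_i$ is simply connected and compact, hence its boundary behaves well — since it is exactly the point where the topological hypothesis (simple connectivity, no triple points) is used, and it is where a sloppy argument could go wrong.
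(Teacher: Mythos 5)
There is a genuine gap at step (3): the auxiliary graph you build need not be planar, so the Euler-formula count in step (4) and the face-to-hole correspondence in step (5) cannot even get started. Your graph has one hub vertex per arc-connected component of each $X_i$ and one vertex per component of $S$, each of the latter joined to exactly two hubs; hence it is the $1$-subdivision of the ``intersection multigraph'' of the components, with one edge per component of a pairwise intersection. This multigraph can contain $K_5$: take five pairwise crossing thin rectangles (thickened lines in general position), which are connected, pairwise intersect in a single component, and have no triple points. Then your graph is a subdivision of $K_5$, so by Kuratowski's theorem it admits no plane drawing at all, and in the drawing you describe the arcs from the crossing rhombi to the hub points are forced to cross inside the overlap regions (the edges for $X_i$ and $X_j$ both run through $X_i\cap X_j\subseteq S$, so ``disjoint arcs within $X_i$'' does not make them disjoint from the arcs of the other sets). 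The inequality you are aiming for is of course the lemma itself, but the proposed route through planarity of this particular graph is not available.

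The paper's proof is the same general strategy (auxiliary plane graph plus Euler's formula) but chooses the second vertex class differently, and that choice is exactly what saves planarity: instead of one hub per component of $X_i$, it takes all components of $X_i\setminus S$. Since every point of $X_i\setminus S$ is covered only by $X_i$, the regions $\Gamma(S)\cup\bigcup_i\Gamma(X_i\setminus S)$ are pairwise disjoint, an edge $(A,B)$ is drawn inside $A\cup B$, and two edges with no common endpoint live in disjoint sets, so the natural drawing is crossing-free (up to routine rerouting near shared vertices); moreover each inner face of that drawing contains a hole of the union. The price is that $|\Gamma(X_i\setminus S)|$ can be large, and this is recovered by a separate count inside each $X_i$: the subgraph $G_i$ on $\Gamma(X_i\cap S)\cup\Gamma(X_i\setminus S)$ has exactly $|\Gamma(X_i)|$ connected components, whence $|E(G_i)|\geq|V(G_i)|-|\Gamma(X_i)|$, and summing (each $S$-component is a vertex of exactly two of the $G_i$) gives the edge lower bound fed into Euler's formula. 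A further, smaller, issue with your write-up: in step (5) you lean on simple connectivity of the components of the $X_i$, but the lemma as stated makes no such assumption (and the paper's argument never uses it); extra holes coming from non-simply-connected pieces only help the inequality, so a correct proof should not need that hypothesis.
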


\begin{proof}
Let $G$ be the bipartite graph with the following vertex and edge sets:
\begin{gather*}
V(G)=\Gamma(S)\cup\bigcup_{i=1}^n\Gamma(X_i\setminus S),\\
E(G)=\Bigl\{(A,B)\in\Gamma(S)\times\Bigl(\bigcup_{i=1}^n\Gamma(X_i\setminus S)\Bigr)\colon\text{$A$ and $B$ touch each other}\Bigr\}.
\end{gather*}
That is, $G$ is the contact graph of the sets in $V(G)$.
We have $\Gamma(X_i\cap S)\subseteq\Gamma(S)$ for every $i$.
Let $G_i$ denote the subgraph of $G$ induced by the vertex set $\Gamma(X_i\cap S)\cup\Gamma(X_i\setminus S)$.
The number of connected components of $G_i$ is exactly $|\Gamma(X_i)|$, and thus $|E(G_i)|\geq|V(G_i)|-|\Gamma(X_i)|$.
Since each edge of $G$ belongs to exactly one of $G_1,\ldots,G_n$ and each arc-connected component of $S$ belongs to exactly two of $X_1,\ldots,X_n$, we have
\[|E(G)|=\sum_{i=1}^n|E(G_i)|\geq\sum_{i=1}^n\bigl(|V(G_i)|-|\Gamma(X_i)|\bigr)=|V(G)|+|\Gamma(S)|-\sum_{i=1}^n|\Gamma(X_i)|.\]
The graph $G$ is planar---its representation as the contact graph of the sets in $V(G)$ yields a plane drawing of $G$ in a way very similar to that described in the proof of Proposition~\ref{prop:pseudodiscs} for pseudodiscs.
Since each inner face of that drawing surrounds a hole of $X_1\cup\cdots\cup X_n$, the number of holes in $X_1\cup\cdots\cup X_n$ is at least the number of inner faces in the drawing.
Therefore, by Euler's formula, we have
\[h\Bigl(\bigcup_{i=1}^nX_i\Bigr)\geq|E(G)|-|V(G)|+1\geq|\Gamma(S)|-\sum_{i=1}^n|\Gamma(X_i)|+1.\qedhere\]
\end{proof}

Note that if $X_1,\ldots,X_n$ and $S$ are as in Lemma~\ref{lem:euler}, then each intersecting pair of sets from $X_1,\ldots,X_n$ gives rise to at least one separate component of $\Gamma(S)$.

\section{Proof of Theorem \ref{thm:main2}}
\label{sec:proof}

Like in the proof of Proposition~\ref{prop:pseudodiscs}, we will show that the intersection graph of any $k$-fold packing by members of $\calF$ has no subgraph isomorphic to $K_{t,t}$ for $t$ large enough, and then apply Theorem~\ref{thm:fox-pach}.

\begin{lemma}
\label{lem:ind1}
Let\/ $k\geq 2$ be a positive integer, $\alpha$ be a positive real, and\/ $f\colon\setN\to\setN$ be a function such that\/ $f(n)=o(n^2)$.
Then there is an integer\/ $M=M_f(k,\alpha)$ with the following property: if
\begin{enumerate}
\item $X_1,\ldots,X_n$ are arc-connected compact sets in the plane that form a\/ $k$-fold packing,
\item\label{item:2} there are at least\/ $\alpha\tbinom{n}{k}$\/ $k$-tuples of sets from\/ $X_1,\ldots,X_n$ with nonempty intersection,
\item\label{item:3} $h(X_{i_1}\cup\cdots\cup X_{i_t})\leq f(t)$ for any choice of\/ $i_1,\ldots,i_t\in\{1,\ldots,n\}$,
\end{enumerate}
then\/ $n<M$.
\end{lemma}

\begin{proof}
By \ref{item:2} and by the pigeonhole principle, there is a $(k-2)$-tuple of sets from $X_1,\ldots,X_n$ that belongs to at least $\alpha\tbinom{n}{k}\tbinom{k}{k-2}/\tbinom{n}{k-2}=\alpha\tbinom{n-k+2}{2}$ $k$-tuples of sets from $X_1,\ldots,X_n$ with nonempty intersection.
Assume without loss of generality that this $(k-2)$-tuple is $X_1,\ldots,X_{k-2}$.
Let $Y_i=X_1\cap\cdots\cap X_{k-2}\cap X_i$ for $k-1\leq i\leq n$.
It follows that there are at least $\alpha\tbinom{n-k+2}{2}$ intersecting pairs of sets from $Y_{k-1},\ldots,Y_n$.
Since $X_1,\ldots,X_n$ form a $k$-fold packing, the $n-k+2$ sets $Y_{k-1},\ldots,Y_n$ form a $2$-fold packing.
Therefore, by Lemma~\ref{lem:euler}, we have
\begin{equation}
h\Bigl(\bigcup_{i=k-1}^nY_i\Bigr)\geq\alpha\tbinom{n-k+2}{2}-\sum_{i=k-1}^n|\Gamma(Y_i)|+1.\tag{$*$}\label{eq}
\end{equation}

We claim that $|\Gamma(Y_i)|\leq\tbinom{k-1}{2}f(2)+1$ for $k-1\leq i\leq n$.
To show this, we assume without loss of generality that $i=k-1$ (that is, $Y_i=X_1\cap\cdots\cap X_{k-1}$) and use induction on $r$ to prove that $|\Gamma(X_1\cap\cdots\cap X_r)|\leq\tbinom{r}{2}f(2)+1$ for $1\leq r\leq k-1$.
Since $X_1$ is arc-connected, we have $|\Gamma(X_1)|=1$.
For $r\geq 2$, we apply Lemma~\ref{lem:euler} to the two sets $X_1\cap\cdots\cap X_{r-1}$ and $X_r$ to get
\[\begin{split}|\Gamma(X_1\cap\cdots\cap X_r)|&\leq h\bigl((X_1\cap\cdots\cap X_{r-1})\cup X_r\bigr)+|\Gamma(X_1\cap\cdots\cap X_{r-1})|+|\Gamma(X_r)|-1\\
&\leq h\bigl((X_1\cup X_r)\cap\cdots\cap(X_{r-1}\cup X_r)\bigr)+\tbinom{r-1}{2}f(2)+1\\
&\leq h(X_1\cup X_r)+\cdots+h(X_{r-1}\cup X_r)+\tbinom{r-1}{2}f(2)+1\\
&\leq(r-1)f(2)+\tbinom{r-1}{2}f(2)+1=\tbinom{r}{2}f(2)+1.\end{split}\]
The second inequality above follows from the induction hypothesis and the assumption that $X_r$ is arc-connected, the third one follows from the fact that every hole in an intersection of sets contains a hole of one of those sets, and the last one follows from \ref{item:3}.
This proves the claim.

The claim and the inequality \eqref{eq} imply that
\[h\Bigl(\bigcup_{i=k-1}^nY_i\Bigr)\geq\alpha\tbinom{n-k+2}{2}-(n-k+2)\bigl(\tbinom{k-1}{2}f(2)+1\bigr)+1.\]
On the other hand, again by the fact that every hole in an intersection of sets contains a hole of one of those sets and by \ref{item:3}, we have
\[h\Bigl(\bigcup_{i=k-1}^nY_i\Bigr)=h\Bigl(X_1\cap\cdots\cap X_{k-2}\cap\Bigl(\bigcup_{i=k-1}^nX_i\Bigr)\Bigr)\leq(k-2)f(1)+f(n-k+2).\]
The two inequalities yield
\[f(n-k+2)\geq\alpha\tbinom{n-k+2}{2}-(n-k+2)\bigl(\tbinom{k-1}{2}f(2)+1\bigr)+1-(k-2)f(1),\]
which cannot hold for arbitrarily large $n$, as $f(n)=o(n^2)$.
This completes the proof.
\end{proof}

We will need an observation due to Katona, Nemetz and Simonovits \cite{KNS64} generalizing Tur\'an's theorem to $k$-uniform hypergraphs.
A \emph{$k$-uniform hypergraph} $H$ consists of a set of \emph{vertices}, denoted by $V(H)$, and a set of \emph{edges}, denoted by $E(H)$, that are $k$-element subsets of $V(H)$.
An \emph{independent set} in $H$ is a subset of $V(H)$ that does not entirely contain any edge of $H$.

\begin{theorem}[Katona, Nemetz, Simonovits \cite{KNS64}]
\label{thm:turan}
For any\/ $k,m\in\setN$, every\/ $k$-uniform hypergraph with\/ $n\geq m$ vertices and fewer than\/ $\tbinom{n}{k}/\tbinom{m}{k}$ edges contains an independent set of size\/ $m$.
\end{theorem}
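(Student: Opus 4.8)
The plan is to prove this by a simple averaging (double-counting) argument over all $m$-element subsets of the vertex set. Write $n=|V(H)|$ and let $e=|E(H)|$, so that by hypothesis $e<\tbinom{n}{k}/\tbinom{m}{k}$; I may assume $k\leq m\leq n$, since for $m<k$ no edge can fit inside an $m$-set and the conclusion is trivial. I would count, in two ways, the number of incident pairs $(D,S)$ in which $D\in E(H)$ is an edge and $S$ is an $m$-subset of $V(H)$ with $D\subseteq S$.

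Grouping these pairs by the edge $D$ gives the first count: each fixed edge $D$, being a $k$-element set, is contained in exactly $\tbinom{n-k}{m-k}$ subsets of size $m$, because the remaining $m-k$ vertices of $S$ must be chosen among the $n-k$ vertices outside $D$. Hence the number of incident pairs equals $e\cdot\tbinom{n-k}{m-k}$. Dividing instead by the total number $\tbinom{n}{m}$ of $m$-subsets, the average number of edges contained in a uniformly chosen $m$-subset is $e\cdot\tbinom{n-k}{m-k}/\tbinom{n}{m}$.

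The one identity to verify is $\tbinom{n-k}{m-k}/\tbinom{n}{m}=\tbinom{m}{k}/\tbinom{n}{k}$, which follows by expanding both sides into factorials, as both equal $m!\,(n-k)!/((m-k)!\,n!)$. With this identity, the average number of edges per $m$-subset equals $e\cdot\tbinom{m}{k}/\tbinom{n}{k}$, and the assumption $e<\tbinom{n}{k}/\tbinom{m}{k}$ shows at once that this average is strictly less than $1$.

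Since the number of edges lying inside a given $m$-subset is a nonnegative integer while the average of these numbers over all $\tbinom{n}{m}$ subsets is strictly below $1$, at least one $m$-subset must contain strictly fewer than one edge, hence none at all; this subset is the desired independent set of size $m$. The argument is short and elementary, so there is no real obstacle: the only points requiring care are the binomial identity and the concluding integrality step, the passage from ``average $<1$'' to ``some subset contains $0$ edges,'' which uses that $m$-subsets exist at all, i.e.\ that $m\leq n$.
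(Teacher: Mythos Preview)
Your averaging/double-counting argument is correct and is precisely the standard proof of this result. Note, however, that the paper does not give its own proof of this theorem: it is merely quoted as a known result with a citation to Katona, Nemetz, and Simonovits, so there is nothing in the paper to compare your argument against beyond the statement itself.
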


\noindent
A bound sharper than that of Theorem \ref{thm:turan} has been established by Spencer \cite{Spe72} using a simple probabilistic argument, but we will not need it.

\begin{lemma}
\label{lem:ind2}
Let\/ $k,\ell\in\setN$, and let\/ $f\colon\setN\to\setN$ be a function such that\/ $f(n)=o(n^2)$.
Then there is an integer\/ $N=N_f(k,\ell)$ with the following property: if
\begin{enumerate}
\item $X_1,\ldots,X_n$ are arc-connected compact sets in the plane that form a\/ $k$-fold packing,
\item no\/ $\ell$ sets from\/ $X_1,\ldots,X_n$ are pairwise disjoint,
\item\label{item2:3} $h(X_{i_1}\cup\cdots\cup X_{i_t})\leq f(t)$ for any choice of\/ $i_1,\ldots,i_t\in\{1,\ldots,n\}$,
\end{enumerate}
then\/ $n<N$.
\end{lemma}

\begin{proof}
We proceed by induction on $k$.
For $k=1$, we can set $N_f(1,\ell)=\ell$ and the assertion obviously holds.
For $k\geq 2$, let $H$ denote the $k$-uniform hypergraph with $V(H)=\{X_1,\ldots,X_n\}$ and $E(H)$ consisting of the $k$-tuples of sets with nonempty intersection.
Let $m=N_f(k-1,\ell)$ and $\alpha=1/\tbinom{m}{k}$.
Set $N_f(k,\ell)=\max\{M_f(k,\alpha),m\}$ for $M_f$ as claimed by Lemma~\ref{lem:ind1}.
If $|E(H)|\geq\alpha\tbinom{n}{k}$, then we can apply Lemma~\ref{lem:ind1} to conclude that $n\leq N_f(k,\ell)$.
Thus, suppose $|E(H)|<\alpha\tbinom{n}{k}$.
Since $n\geq m$, it follows from Theorem~\ref{thm:turan} that $H$ contains an independent set $\calI$ of size $m$.
Such an independent set is a $(k-1)$-fold packing, so we can apply the induction hypothesis to $\calI$ and conclude that $m<N_f(k-1,\ell)$, which is a contradiction.
\end{proof}

\begin{proof}[Proof of Theorem~\ref{thm:main2}]
Let $\calX$ be a finite subfamily of $\calF$ that forms a $k$-fold packing, and let $G$ be the intersection graph of $\calX$.
We show, for a suitable constant $t\in\setN$ which depends on $k$ and $f$, that $G$ contains no subgraph isomorphic to $K_{t,t}$.
Then, by Theorem~\ref{thm:fox-pach}, $G$ contains a vertex of degree smaller than $c(t)$, so that we can set $p_f(k)=c(t)$.

Suppose, for some $\ell\in\setN$, that $G$ contains an \emph{induced} subgraph isomorphic to $K_{\ell,\ell}$, and let $\calY\subseteq\calX$ be the set of vertices of this subgraph.
By \ref{item2:3}, we have $f(2\ell)\geq h(\bigcup\calY)$.
By Lemma~\ref{lem:euler}, the fact that $\calY$ is a $2$-fold packing, the remark after the proof of Lemma~\ref{lem:euler}, and the assumption that each set in $\calF$ is arc-connected, we have $h(\bigcup\calY)\geq\ell^2-2\ell+1$.
Hence we have $f(2\ell)\geq\ell^2-2\ell+1$, which contradicts the assumption that $f(n)=o(n^2)$ if $\ell$ is large enough.
Therefore, we can assume that $G$ contains no induced $K_{\ell,\ell}$.

Let $t=N_f(k,\ell)$ for $N_f$ as claimed by Lemma~\ref{lem:ind2}.
Suppose for a contradiction that $G$ contains a subgraph isomorphic to $K_{t,t}$.
Let $\calA$ and $\calB$ denote its two vertex classes.
At least one of $\calA,\calB$, say $\calA$, contains no independent set (packing) of size $\ell$, as otherwise the two independent sets, one in $\calA$ and one in $\calB$, would induce a subgraph isomorphic to $K_{\ell,\ell}$ in $G$.
Therefore, the assumptions of Lemma~\ref{lem:ind2} are satisfied for $\calA$, and we conclude that $|\calA|<t$.
This contradiction completes the proof of Theorem~\ref{thm:main2}.
\end{proof}

\section*{Acknowledgment}

\noindent
We thank Rado\v{s} Radoi\v{c}i\'c and anonymous referees for their valuable remarks and suggestions.

\end{document}